\newcommand\Hom{\operatorname{Hom}}
\newcommand\End{\operatorname{End}}
\newtheorem{theorem}{Theorem}[section]
\newtheorem{proposition}[theorem]{Proposition}
\newtheorem{lemma}[theorem]{Lemma}
\newtheorem{definition}[theorem]{Definition}
\theoremstyle{definition}
\def\R{\mathbb{R}}
\def\C{\mathbb{C}}
\renewcommand\Im{\operatorname{Im}}
\DeclareMathOperator\id{id}
\title[Complex structures on the Six-Sphere]{Chern's contribution to the Hopf problem: an exposition based on Bryant's paper}
\author{Aleksy Tralle and Markus Upmeier}
\begin{document}

\maketitle 

\begin{abstract}
We give a comprehensive account of Chern's Theorem that $S^6$ admits no $\omega$-compatible almost complex structures. No claim to originality is being made, as the paper is mostly an expanded version of material in differential sources already in the literature.
\end{abstract}

\setcounter{tocdepth}{1}
\tableofcontents

\section{Introduction}

Following Bryant's exposition~\cite{Br}, we present a theorem of Chern that there is no \emph{integrable} almost complex structures on $S^6$ compatible with the standard 2\nobreakdash-form $\omega$ on $S^6$. 
It is determined by the octonionic almost complex structure $J_{can}$, see \eqref{def-standard-ac}, and the round metric $g_{can}$ on $S^6$ through
\[
\omega(u,v) \coloneqq g_{can}(J_{can}u,v).
\]


\begin{definition}
An almost complex structure $J$ on $S^6$ is {\it $\omega$-compatible} if
\begin{equation}\label{omega-compatible}
\omega(u,v)=\omega(Ju,Jv)\qquad \forall u,v.
\end{equation}
\end{definition}

We have left out the usual condition $\omega(u,Ju)>0$ for $u\neq 0$. Instead, the \emph{$\omega$-index} of $J$ is defined as the index $(2p,2q)$ of the non-degenerate symmetric bilinear form $g_J\coloneqq \omega(\cdot,J\cdot)$.
The main result is as follows.

\begin{theorem}[Chern]\label{thm:Chern}
There are no $\omega$-compatible complex structures on $S^6$.
\end{theorem}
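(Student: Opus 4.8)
\emph{Strategy.} The plan is to show that an $\omega$-compatible complex structure $J$ would force the canonical bundle $K_J=\Lambda^{3,0}_JT^*S^6$ of the complex manifold $(S^6,J)$ (compact of complex dimension $3$, by Newlander--Nirenberg) to be holomorphically trivial, and then to observe that no complex structure on $S^6$ has this property. The latter observation is quick: if $\Upsilon$ is a nowhere-vanishing holomorphic $(3,0)$-form, then $d\Upsilon=\partial_J\Upsilon+\bar\partial_J\Upsilon=0$ — the first term lies in $\Lambda^{4,0}=0$ and the second vanishes by holomorphicity — so $\Upsilon$ is closed; since $H^3(S^6;\C)=0$ we may write $\Upsilon=d\kappa$, and then, using that $\bar\Upsilon$ is also closed,
\[
\int_{S^6}\Upsilon\wedge\bar\Upsilon=\int_{S^6}d(\kappa\wedge\bar\Upsilon)=0
\]
by Stokes; but $\Upsilon\wedge\bar\Upsilon$ is a nowhere-vanishing $6$-form, so its integral is nonzero — a contradiction. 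Thus everything reduces to the \emph{Key Lemma}: an $\omega$-compatible complex structure on $S^6$ has holomorphically trivial canonical bundle.

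\emph{Towards the Key Lemma.} Since $H^2(S^6)=0$ we have $c_1(S^6,J)=0$, so $K_J$ is topologically trivial; choose a smooth nowhere-vanishing section $\Upsilon_0$ with $|\Upsilon_0|_{g_J}$ constant, where $(2p,2q)$, $p+q=3$, is the $\omega$-index of $J$. This reduces the structure group to $\mathrm{SU}(p,q)$ and presents the data as an $\mathrm{SU}(p,q)$-structure $(\omega,\Upsilon_0)$ on $S^6$ with $J$ integrable. On the bundle of adapted coframes I would write the Cartan structure equations: integrability kills the $(0,2)$-part of the Nijenhuis torsion (so $d=\partial_J+\bar\partial_J$), $\Upsilon_0$ fails to be holomorphic only through a connection $1$-form $\beta$ on $K_J$ (i.e.\ $d\Upsilon_0=\beta\wedge\Upsilon_0+(\text{torsion of }\omega)$), and $\tfrac{i}{2\pi}\,d\beta$ represents $c_1(K_J)=0$. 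The essential extra input is the rigid geometry of $S^6$: the standard $\omega$ satisfies the nearly-Kähler equations
\[
d\omega=3\rho,\qquad d\sigma=-2\,\omega\wedge\omega,\qquad \rho+i\sigma=\Upsilon_{can},
\]
where $\Upsilon_{can}$ is the octonionic complex volume form and $\rho,\sigma$ are of type $(3,0)+(0,3)$ with respect to $J_{can}$ (facts that would be established beforehand). Since $\omega$-compatibility means $\omega$ is of type $(1,1)$ with respect to $J$, the decomposition $d\omega=\partial_J\omega+\bar\partial_J\omega$ has no $(3,0)$-part, so the \emph{fixed} $3$-form $\rho=\tfrac13 d\omega$ must be of type $(2,1)+(1,2)$ with respect to $J$. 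Expressing $\omega,\rho,\sigma$ in an adapted coframe and matching types in the equations above should pin down the intrinsic torsion and exhibit $\beta$ as the connection form of a flat Hermitian metric on $K_J$ (equivalently, solve $\bar\partial_J f=-\beta^{0,1}$ for a function $f$, so that $e^{f}\Upsilon_0$ is holomorphic); since $S^6$ is simply connected, a flat Hermitian line bundle whose flat connection is compatible with $\bar\partial_J$ admits a nowhere-vanishing parallel holomorphic section, proving the Key Lemma. (The type constraint on $\rho$ also restricts the $\omega$-index: via Hitchin's classification of stable $3$-forms it forces $J$ to induce the opposite orientation to $J_{can}$, ruling out the positive-definite index and, in particular, re-confirming that $J_{can}$ itself is not integrable.)

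\emph{Main obstacle.} The hard part is the moving-frames computation just sketched: extracting from the nearly-Kähler structure equations of $S^6$, together with integrability and $\omega$-compatibility, the precise identities on the intrinsic torsion that force $K_J$ to be holomorphically trivial. One must carefully track how the fixed forms $\omega,\rho,\sigma$ decompose with respect to the \emph{variable} $J$, invoke the Bianchi identities, and close up the resulting exterior differential system; it is crucial that $S^6$ carries the octonionic structure and not merely that its low-degree cohomology vanishes (the flat torus $\C^3/\Lambda$ shows that $\omega$-compatible complex structures do exist in general). This is where Chern's argument, in Bryant's formulation, does its real work; the remaining ingredients — the Stokes argument of the first step, the vanishing $c_1=0$ on $S^6$, and the orientation/index bookkeeping — are comparatively routine.
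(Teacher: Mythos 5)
Your opening reduction is correct and self-contained: the Stokes argument showing that a compact complex $3$-fold with $H^3(\,\cdot\,;\C)=0$ admits no nowhere-vanishing holomorphic $(3,0)$-form is complete as written. But the entire weight of the theorem then rests on your Key Lemma, and for that you offer only a program (``matching types \dots should pin down the intrinsic torsion and exhibit $\beta$ as the connection form of a flat Hermitian metric''), not a proof. Two concrete problems with the program as sketched. First, the final step requires solving $\bar\partial_J f=-\beta^{0,1}$ on $(S^6,J)$; this needs the Dolbeault class of $\beta^{0,1}$ to vanish, and nothing you have said controls $H^{0,1}_{\bar\partial}$ of a hypothetical complex $S^6$ (its vanishing is emphatically not known, and cannot be extracted from $H^1(S^6;\C)=0$ without a $\partial\bar\partial$-lemma you do not have). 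Second, the parenthetical index claim (``Hitchin's classification of stable $3$-forms forces $J$ to induce the opposite orientation, ruling out the positive-definite index'') is asserted, not argued, and it is also not the bookkeeping actually required: what must be excluded are the two \emph{indefinite} $\omega$-indices $(4,2)$ and $(2,4)$, after which \emph{both} definite cases remain and must each be contradicted.

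For comparison, the paper turns the very type constraint you identify --- that $\rho=\tfrac13 d\omega=-\Im(\Upsilon)$ has no $(3,0)$-part for an integrable $\omega$-compatible $J$ --- into a pointwise linear-algebra contradiction, never mentioning the canonical bundle. On the bundle of pairs (octonionic frame, $J$-frame) one writes $\theta=r\eta+s\bar\eta$; then $\Upsilon^{3,0}_J=8\det(r)\,\eta_1\wedge\eta_2\wedge\eta_3$ and $\bar\Upsilon^{3,0}_J=8\det(\bar s)\,\eta_1\wedge\eta_2\wedge\eta_3$, so $(d\omega)^{3,0}_J=0$ gives Chern's identity $\det(\bar s)=\det(r)$. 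Separately, an Euler-class argument ($e(TS^6)\neq0$, while any splitting $TS^6=E_4\oplus E_2$ forces $e=0$) rules out the indefinite indices, so $H=r^t\bar r-\bar s^t s$ is definite; say $r^t\bar r>\bar s^t s\ge 0$. Then $r$ is invertible, Chern's identity makes $s$ invertible, and the elementary monotonicity $\det A>\det B$ for $A>B>0$ yields $|\det r|^2>|\det s|^2$, contradicting $|\det r|=|\det s|$. So the ingredients you list are the right ones, but the mechanism that closes the argument is this pointwise determinant inequality, not holomorphic triviality of $K_J$; until you supply an actual proof of your Key Lemma (or replace it by the identity above), the proposal does not prove the theorem.
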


The reader may also wish to refer to \cite{Da} for a related proof.

\section{The exceptional Lie group $G_2$}

\subsection{$G_2$-action on $S^6$}

For the understanding of this paper, a shortcut definition of the exceptional Lie group $G_2$ suffices. More information may be found in \cite{Ag}.
Let $V\coloneqq \R\oplus \C^3$ with basis $e_1,\ldots, e_7$ and the standard inner product. We identify $S^6$ with the unit sphere in $V$. Define a basis of the complexification $V_\C = \C^7$ by
\begin{equation}\label{standard-basis}
	e_1, F_1=\frac{e_2-ie_3}{2},F_2=\frac{e_4-ie_5}{2},F_3=\frac{e_6-ie_7}{2}, \bar{F}_1, \bar{F}_2, \bar{F}_3.
\end{equation}
Then $|e_1|=1$ and $|F_k|=|\bar{F}_k|=1/\sqrt{2}$. We use this basis to identify endomorphisms of $V_\C$ with matrices. Let
	\begin{equation}\label{G2Lie}
\mathfrak{g}_2 = \left\{
\begin{pmatrix}
	0 & -ia^* & ia^t\\
	-2ia&D&[\bar{a}]\\
	2i\bar{a}& [a] & \bar{D}
	\end{pmatrix}
	\middle|
	a \in \C^3, D\in \mathfrak{su}(3)
	\right\} \subset \C^{7\times 7},
\end{equation}
using the notation $a^*=\bar{a}^t$ and
\[
	[a] \coloneqq \begin{pmatrix}
0&a_3 & -a_2\\
-a_3 & 0 & a_1\\
a_2 & -a_1 & 0
\end{pmatrix} \in \C^{3\times 3},\quad
a\in \C^3.
\]

Then $\mathfrak{g}_2 \subset \mathfrak{so}(V)\subset \mathfrak{su}(V_\C)$ since by normalizing $F_k$ to unit length the matrix in \eqref{G2Lie} becomes skew-Hermitian. 
It is easy to check that \eqref{G2Lie} is closed under the matrix Lie bracket.
According to Lie's Theorems there exists a unique simply-connected Lie group $G_2$ with this Lie algebra  and a smooth monomorphism $G_2 \to SO(V)$. Since the Killing form is negative definite on $\mathfrak{g}_2$, the group $G_2$ is compact so that
\begin{equation}\label{G2faith}
G_2 \subset SO(V)
\end{equation}
is topologically embedded.
Using \eqref{standard-basis} we write this faithful representation as
\begin{align*}
&\begin{gathered}
\xymatrix{
G_2\ar[r]^-\rho\ar[rd]_-{\rho^\C} & SO(V)\cong SO(7)\ar[d]^{-\otimes\C}\\
& SU(V_\C) = SU(\C^7)
}
\end{gathered}
&\rho_g&=(g_1,\ldots, g_7),
&\rho^\C_g &= (x,f_1,f_2,f_3,\bar{f}_1,\bar{f}_2,\bar{f}_3),
\end{align*}
using column notation for the matrices $\rho_g, \rho_g^\C$. Thus $g_i = \rho_g(e_i)$ and $x=g_1$, $f_1=\rho_g^\C(F_1)=\frac12(g_2 - ig_3)$ and so on. The functions $x,f_i,\bar{f_i}\colon G_2 \to \C^7$ are called the \textbf{moving frame} on $G_2$.

Restricting to unit vectors, \eqref{G2faith} defines a smooth $G_2$-action on $S^6$ and $x$ is simply the orbit map at $e_1 \in S^6$. To proceed, we next need the differential of $x$.

\subsection{Structure Equations}

\begin{definition}\label{def:MC}
The \emph{Maurer--Cartan form} $\phi \in \Omega^1(G_2; \mathfrak{g}_2)$ is the matrix-valued form $\phi = g^{-1}dg$. Thus $\phi(X \in T_g G_2)=g^{-1}\cdot X$ \textup(matrix multiplication\textup).
\end{definition}

The wedge product of matrix-valued differential forms is given by the usual formula, using matrix multiplication instead of the product of numbers. In terms of \eqref{G2Lie} we write the components of $\phi$ as
\[
	\phi = \begin{pmatrix}
	0 & -i\theta^* & i\theta^t\\
	-2i\theta&\kappa&[\bar{\theta}]\\
	2i\bar{\theta}& [\theta] & \bar{\kappa}
	\end{pmatrix},\quad
	\theta \in \Omega^1(G_2; \C^3), \kappa \in \Omega^1(G_2; \mathfrak{su}(3)).
\]

\begin{theorem}[Bryant~\cite{BrOct}] \label{thm:str-eq}
We have the \textbf{first structure equations} \textup(where $f=(f_1,f_2,f_3)$ in the obvious vector notation\textup)
\begin{equation}
\label{G2MC}
d(x,f,\bar{f})=(x,f,\bar{f}) \cdot 
\begin{pmatrix}
	0 & -i\theta^* & i\theta^t\\
	-2i\theta&\kappa&[\bar{\theta}]\\
	2i\bar{\theta}& [\theta] & \bar{\kappa}
	\end{pmatrix}.
\end{equation}
Also, the \textbf{second structure equations} hold:
\begin{align}
&d\theta = -\kappa\wedge \theta + [\bar\theta]\wedge\bar\theta\label{second-structure1}\\
&\begin{aligned}
d\kappa &= -\kappa\wedge \kappa + 2\theta\wedge\theta^* - [\bar\theta]\wedge[\theta]
\end{aligned}
\end{align}
\end{theorem}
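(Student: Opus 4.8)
The plan is to deduce both the first and the second structure equations from two facts about the Maurer--Cartan form $\phi$: its defining identity $\phi = g^{-1}\,dg$ from Definition~\ref{def:MC}, and the Maurer--Cartan structure equation $d\phi = -\phi\wedge\phi$, where throughout $\wedge$ of matrix-valued forms combines the wedge of scalar forms with matrix multiplication.

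The first structure equations \eqref{G2MC} then require essentially no computation: multiplying $\phi = g^{-1}\,dg$ on the left by $g$ gives $dg = g\phi$, and writing the $7\times 7$ matrix $g = \rho^\C_g$ in the column notation $(x,f_1,f_2,f_3,\bar f_1,\bar f_2,\bar f_3)$ and differentiating columnwise, this \emph{is} \eqref{G2MC}. Reading off its first column in particular yields the differential of the orbit map, $dx = -2i\sum_k f_k\,\theta_k + 2i\sum_k \bar f_k\,\bar\theta_k$, which is what is needed to continue.

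For the second structure equations I would first recall why $d\phi = -\phi\wedge\phi$: from $d(g^{-1}) = -g^{-1}(dg)\,g^{-1}$ one gets $d\phi = d(g^{-1})\wedge dg = -(g^{-1}dg)\wedge(g^{-1}dg) = -\phi\wedge\phi$. It then remains to expand $-\phi\wedge\phi$ in the $1{+}3{+}3$ block decomposition underlying \eqref{G2Lie} and to equate blocks with $d\phi$. The block carrying $\theta$ (the $(2,1)$ block, up to the factor $-2i$) produces \eqref{second-structure1}, and the block carrying $\kappa$ (the $(2,2)$ block) produces the stated formula for $d\kappa$. No further relations arise from the other blocks: both sides of $d\phi = -\phi\wedge\phi$ are $\mathfrak{g}_2$-valued $2$-forms (for the right-hand side this is $\phi\wedge\phi = \tfrac12[\phi,\phi]$ together with the closure of $\mathfrak{g}_2$ under the matrix commutator, already noted in the text), and a $\mathfrak{g}_2$-valued matrix is determined by its $\theta$- and $\kappa$-slots, so the remaining blocks are automatically forced by the conjugation and transposition relations that define \eqref{G2Lie}.

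The only genuine work -- and hence the step I expect to be the main obstacle -- is the linear-algebra bookkeeping inside that block product: one must verify the bilinear identities that rewrite the wedge-products $\kappa\wedge\theta$, $[\bar\theta]\wedge\bar\theta$, $\theta\wedge\theta^*$ and $[\bar\theta]\wedge[\theta]$ in exactly the stated form, tracking the factors of $i$ and $2$ inherited from the normalization of the $F_k$ in \eqref{standard-basis} and the signs produced by anticommutativity of $1$-forms under $\wedge$. This is mechanical but error-prone; an equivalent and arguably cleaner route is to compute the bracket $[\phi(X),\phi(Y)]$ directly from the description \eqref{G2Lie} on generic tangent vectors $X,Y$ and read the structure equations off $d\phi(X,Y) = -[\phi(X),\phi(Y)]$, which offloads the bookkeeping onto the closure of $\mathfrak{g}_2$ under the bracket.
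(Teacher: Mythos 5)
Your proposal is correct and follows the same route as the paper: the first structure equations are just $dg=g\phi$ read columnwise, and the second ones come from expanding the Maurer--Cartan equation $d\phi=-\phi\wedge\phi$ blockwise (the paper's proof is exactly this, and is no more explicit about the block bookkeeping than you are). Your added observation that the remaining blocks carry no new information because both sides are $\mathfrak{g}_2$-valued is a nice touch the paper leaves implicit.
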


\begin{proof}
In our matrix notation $g=(x,f,\bar{f})$. So \eqref{G2MC} is just Definition~\ref{def:MC} multiplied by $g$. The second structure equations follow by reading off matrix entries on both sides of the so-called Maurer--Cartan equation
\[
	d\phi = d(g^{-1}dg)=-g^{-1}dg\wedge g^{-1}dg = -\phi\wedge \phi.\qedhere
\]
\end{proof}

\subsection{$S^6$ as a homogeneous space}


\begin{lemma}
The action of $G_2$ of $S^6$ is transitive with isotropy group $SU(3)$. Hence the orbit map $x=\rho_g(e_1)$ restricts to a principal $SU(3)$-bundle
\begin{equation}\label{principalSU}
	x\colon G_2 \to S^6,
\end{equation}
where $SU(3)$ is embedded in $G_2\subset SU(\C^7)$ as
\begin{equation}\label{SUembedding}
	\begin{pmatrix}
	1&&\\
	&A&\\
	&&\bar{A}
	\end{pmatrix},\qquad
	\forall A \in SU(3).
\end{equation}
\end{lemma}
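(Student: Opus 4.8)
The structure equations of Theorem~\ref{thm:str-eq} already encode the derivative of the orbit map, so the plan is to read off $dx$ from \eqref{G2MC}, deduce transitivity by a connectedness argument, and then run the orbit--stabilizer machinery, paying attention at the end to which group exactly the stabilizer is.

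First I would extract the first column of \eqref{G2MC}, which gives $dx = -2i\sum_{k=1}^{3}\theta^k f_k + 2i\sum_{k=1}^{3}\bar{\theta}^k\bar{f}_k$. At the identity $e \in G_2$ one has $f_k = F_k$, and under the identification $T_eG_2 \cong \mathfrak g_2$ the form $\theta_e$ is simply the linear projection onto the $a$-coordinate in \eqref{G2Lie}, hence surjective onto $\C^3$. So $dx_e(X)$ ranges over $\{w + \bar w : w \in \operatorname{span}_\C\{F_1,F_2,F_3\}\}$, and since the real and imaginary parts of $F_1,F_2,F_3$ span $e_2,\dots,e_7$, this is precisely $e_1^\perp = T_{e_1}S^6$. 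By $G_2$-equivariance of $x$ the differential is surjective at every point, so $x$ is a submersion; its image is therefore open, it is also closed (being compact, as $G_2$ is), and hence it is all of the connected manifold $S^6$. Thus the action is transitive, the stabilizer $H \coloneqq x^{-1}(e_1)$ is a compact subgroup, the induced map $G_2/H \to S^6$ is a diffeomorphism, and \eqref{principalSU} is a principal $H$-bundle. From the computation of $dx_e$, the Lie algebra of $H$ is $\ker(dx_e)$, namely the matrices in \eqref{G2Lie} with $a = 0$; differentiating \eqref{SUembedding} identifies this with the copy of $\mathfrak{su}(3)$ sitting there.

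The step I expect to be most delicate is upgrading this to $H = SU(3)$ embedded as in \eqref{SUembedding}, since the differential computation only sees the Lie algebra. I would obtain connectedness of $H$ from the homotopy exact sequence of \eqref{principalSU} together with $\pi_1(G_2) = \pi_1(S^6) = 0$ --- equivalently, by noting that $G_2/H_0 \to G_2/H = S^6$ is a covering of a simply connected space and hence one-sheeted. Then, since $SU(3)$ is simply connected, the inclusion $\mathfrak{su}(3) \hookrightarrow \mathfrak g_2$ integrates to a homomorphism $SU(3) \to G_2$; exponentiating the Lie algebra map shows that composing with $\rho^\C$ recovers exactly $A \mapsto \operatorname{diag}(1,A,\bar A)$ of \eqref{SUembedding}, which is a faithful representation (already its restriction to the center $\Z_3 \subset SU(3)$ acts nontrivially on the middle $\C^3$-block). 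Therefore this homomorphism is a closed embedding whose image is the connected subgroup of $G_2$ with Lie algebra $\mathfrak{su}(3)$, i.e.\ $H_0 = H$, giving $H \cong SU(3)$ as claimed. In short, the analytic content is a one-line differential computation powered by the structure equations; the only place where care is genuinely needed is the global identification of the stabilizer, which relies on the simple-connectedness of both $G_2$ and $S^6$ and on the faithfulness of \eqref{SUembedding}.
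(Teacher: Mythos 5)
Your proposal is correct and follows essentially the same route as the paper: read off $dx$ from the first structure equation, conclude $x$ is a submersion with open and closed image, and then use the homotopy exact sequence of the fibration to pin down the stabilizer. The only difference is that you identify the stabilizer via connectedness plus integrating $\mathfrak{su}(3)\hookrightarrow\mathfrak g_2$ to a faithful homomorphism, whereas the paper argues (more tersely) that the stabilizer is simply connected and hence must be $SU(3)$; your version supplies details the paper omits but is not a different argument.
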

\begin{proof}
By \eqref{G2MC} the differential is, where the notation indicates a matrix-vector multiplication $f\cdot \theta=f_1\theta_1 + f_2\theta_2  + f_3\theta_3$,
\begin{equation}\label{dx}
dx=-2if\cdot \theta +2i\bar{f}\cdot\bar\theta.
\end{equation}
Hence $x$ is a submersion. The image is therefore open and closed, so all of $S^6$. By the
long exact sequence of homotopy groups of the fibration $u$ together with the fact that $G_2$ is connected and $S^6$ is simply-connected, the stabilizer must be simply connected and is therefore $SU(3)$.
\end{proof}

\begin{lemma}
For the \textbf{right translation} we have
\begin{align}\label{right-translation}
	R_A^* \theta &= A^{-1}\cdot \theta
	&\forall A&\in SU(3).
\end{align}
\end{lemma}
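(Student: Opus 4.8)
The plan is to derive this from the general right-equivariance of the Maurer--Cartan form and then isolate a single block of the resulting matrix identity. Since $G_2 \subset SU(\C^7)$ is a matrix group, right translation $R_A(g) = g\hat A$ by a fixed $\hat A \in G_2$ has differential $dR_A(X) = X\hat A$, so Definition~\ref{def:MC} gives
\[
(R_A^*\phi)|_g(X) \;=\; \phi|_{g\hat A}(X\hat A) \;=\; (g\hat A)^{-1}(X\hat A) \;=\; \hat A^{-1}\bigl(g^{-1}X\bigr)\hat A \;=\; \operatorname{Ad}(\hat A^{-1})\,\phi|_g(X),
\]
i.e.\ $R_A^*\phi = \hat A^{-1}\,\phi\,\hat A$. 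I would apply this with $\hat A$ the image of $A\in SU(3)$ under the embedding \eqref{SUembedding}, that is $\hat A = \operatorname{diag}(1,A,\bar A)$.

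The next step is to conjugate the block form of $\phi$ in \eqref{G2Lie} by $\operatorname{diag}(1,A,\bar A)$ and compare the $(2,1)$-blocks on the two sides of $R_A^*\phi = \hat A^{-1}\phi\hat A$. Pulling back a matrix-valued form is done blockwise, so the $(2,1)$-block of the left-hand side is $-2i\,R_A^*\theta$, while the $(2,1)$-block of the right-hand side is $A^{-1}\cdot(-2i\theta)\cdot 1 = -2i\,(A^{-1}\cdot\theta)$, because $\hat A^{-1}$ and $\hat A$ are block-diagonal. Equating the two yields $R_A^*\theta = A^{-1}\cdot\theta$, as claimed. (The $(2,2)$-block similarly produces the companion formula $R_A^*\kappa = A^{-1}\kappa A$, which one may record in passing.)

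I do not anticipate a genuine obstacle here: it is a short bookkeeping computation once the equivariance formula is in place. The only points requiring care are getting the conjugation on the correct side — it is $\operatorname{Ad}(\hat A^{-1})$, not $\operatorname{Ad}(\hat A)$ — and, if one also wishes to check that the off-diagonal blocks come out consistently (so that $R_A^*\phi$ again lands in $\mathfrak{g}_2$), invoking the $SU(3)$ relations $A^* = A^{-1}$, $\det A = 1$ together with the transformation rule $B^t[u]B = \det(B)\,[B^{-1}u]$ for the bracket operator of \eqref{G2Lie}; these force, for instance, the $(2,3)$-block $A^{-1}[\bar\theta]\bar A$ to equal $[\,\overline{A^{-1}\theta}\,]$. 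None of that is needed for the statement about $\theta$, for which the $(2,1)$-block already suffices.
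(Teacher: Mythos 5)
Your proposal is correct and follows essentially the same route as the paper: both compute $(R_A^*\phi)(X)=(gA)^{-1}(XA)=A^{-1}\phi(X)A$ from Definition~\ref{def:MC} and then read off the relevant block of the conjugation by $\operatorname{diag}(1,A,\bar A)$. The block-by-block extraction of the $(2,1)$-entry is exactly the ``perform the matrix multiplication and compare entries'' step in the paper.
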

\begin{proof}
We have
\[
(R_A^*\phi)(X) = (gA)^{-1} X\cdot A = A^{-1}\cdot \phi(X)\cdot A\qquad \forall X \in T_gG_2.
\]
Now perform the matrix multiplication and compare entries in
\[
	R_A^*\phi_{G_2} = 
	\begin{pmatrix}
	1&0&0\\
	0&A^{-1}&0\\
	0&0&A^t
	\end{pmatrix}
	\cdot
	\begin{pmatrix}
	0 & -i\theta^* & i\theta^t\\
	-2i\theta&\kappa&[\bar{\theta}]\\
	2i\bar{\theta}& [\theta] & \bar{\kappa}
	\end{pmatrix}
	\cdot
	\begin{pmatrix}
	1&0&0\\
	0&A&0\\
	0&0&\bar{A}
	\end{pmatrix}.\qedhere
\]
\end{proof}

\section{The standard almost complex structure on $S^6$}


Let $y\in S^6$. Fix also a $g\in G_2$ with $x(g)=y$. The submersion \eqref{principalSU} induces an exact sequence
\begin{equation}\label{exact-G2S6-sequence}
	0\to T_g(gSU(3)) \to T_g G_2 \xrightarrow{dx_g} T_y S^6  \to 0.
\end{equation}

According to \eqref{dx}, the forms $\theta^i_g, \bar\theta^i_g$ vanish on the kernel of $dx$ and thus descend to a basis of $T_y^* S^6 \otimes \C$. 
Note the dependence on $g$, but by \eqref{right-translation} the spanned subspaces $\langle \theta^1,\theta^2,\theta^3 \rangle$ and $\langle \bar\theta^1,\bar\theta^2,\bar\theta^3 \rangle$ are invariant under $SU(3)$ and hence determine a well-defined subspace of $T_yS^6\otimes \C$. We may therefore define:

\begin{definition}
The octonionic complex structure $J_{can}$ is defined for any choice of $g\in G_2$ with $x(g)=y$ by the decomposition
\begin{align}\label{def-standard-ac}
	T^{1,0}_{J_{can}} (T^*_yS^6) &= \langle \theta^1_g,\theta^2_g,\theta^3_g \rangle,
	&T^{0,1}_{J_{can}} (T^*_yS^6) &= \langle \bar\theta^1_g,\bar\theta^2_g,\bar\theta^3_g \rangle.
\end{align}
\end{definition}

This is in fact a nearly K\"ahler structure (see also \cite{Da}):

\begin{proposition} \label{prop:str-eq-cor} There exists a complex 3-form $\Upsilon$ on $S^6$ such that
\begin{enumerate}
\item $x^*g_{can}=4\theta^t\circ\bar\theta$ where $g_{can}$ denotes also the $\C$-bilinear extension of the round metric to $TS^6\otimes \C$.
\item $x^*\omega=2i\theta^t\wedge\bar\theta$
\item $d\omega=-3\operatorname{Im}(\Upsilon)$
\item $x^*\Upsilon=8\theta^1\wedge\theta^2\wedge\theta^3$ and $\Upsilon$ has $J_{can}$-type $(3,0)$.
\end{enumerate}
\end{proposition}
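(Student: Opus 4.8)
The plan is to derive all four statements from the structure equations of Theorem~\ref{thm:str-eq} together with the fact that $\theta$ and $\bar\theta$ descend to $S^6$, using descent along the submersion $x\colon G_2\to S^6$. The general principle is that a tensor on $S^6$ is determined by its pullback to $G_2$, provided that pullback is basic, i.e.\ horizontal and $SU(3)$-invariant; so for (i), (ii) and (iv) I would first write down the candidate expression on $G_2$ in terms of $\theta,\bar\theta$, check with \eqref{right-translation} that it is $SU(3)$-invariant (hence basic, since $\theta,\bar\theta$ are already horizontal by \eqref{dx}), and conclude it is the pullback of a unique form on $S^6$. The content of the proposition is then that these descended forms are the standard ones.

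For (i): from \eqref{dx} we have $dx = -2if\cdot\theta + 2i\bar f\cdot\bar\theta$, and since the moving frame columns $x,f_k,\bar f_k$ form a unitary basis of $\C^7$ with $|x|=1$, $\langle f_j,\bar f_k\rangle = \tfrac12\delta_{jk}$ and $\langle f_j,f_k\rangle=0$, I would compute $\langle dx,dx\rangle$ (the pullback of the $\C$-bilinear round metric, which is the restriction of the standard inner product on $V$) and get $x^*g_{can} = 4\,\theta^t\circ\bar\theta$, where $\circ$ is the symmetric product; the cross terms $\theta\cdot\theta$ and $\bar\theta\cdot\bar\theta$ drop out by the isotropy relations. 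For (ii): by definition $\omega(u,v)=g_{can}(J_{can}u,v)$, and since $J_{can}$ is characterized by \eqref{def-standard-ac}, on $1$-forms $J_{can}$ acts as $+i$ on the $\theta^k$ and $-i$ on the $\bar\theta^k$; combining this with (i) and the usual relation between a Hermitian metric and its fundamental form gives $x^*\omega = 2i\,\theta^t\wedge\bar\theta = 2i\sum_k \theta^k\wedge\bar\theta^k$. For (iv): set $\Upsilon$ to be the form on $S^6$ with $x^*\Upsilon = 8\,\theta^1\wedge\theta^2\wedge\theta^3$; invariance under $SU(3)$ holds because $\det A = 1$, so this $3$-form of type $(3,0)$ is basic and descends; that it has $J_{can}$-type $(3,0)$ is immediate from \eqref{def-standard-ac}.

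For (iii), which I expect to be the main obstacle, I would compute $d\omega$ by pulling back to $G_2$: applying $d$ to $x^*\omega = 2i\,\theta^t\wedge\bar\theta$ and using the second structure equation \eqref{second-structure1}, namely $d\theta = -\kappa\wedge\theta + [\bar\theta]\wedge\bar\theta$ and its conjugate $d\bar\theta = -\bar\kappa\wedge\bar\theta + [\theta]\wedge\theta$. The $\kappa$-terms must cancel, which is forced because $d(x^*\omega)$ is basic and $\kappa$ is a connection form, not horizontal; concretely $\kappa\in\mathfrak{su}(3)$ and the two $\kappa$-contributions $-\kappa\wedge\theta$ paired against $\bar\theta$ and $\theta$ paired against $-\bar\kappa\wedge\bar\theta$ combine to $\operatorname{tr}$ of something trace-free. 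What remains is $2i\big([\bar\theta]\wedge\bar\theta\big)^t\wedge\bar\theta + 2i\,\theta^t\wedge[\theta]\wedge\theta$, and the content of the calculation is to recognize this as a multiple of $\operatorname{Im}(\theta^1\wedge\theta^2\wedge\theta^3) = \tfrac18\operatorname{Im}(x^*\Upsilon)$. One checks $[\theta]\wedge\theta$ has components $\theta^j\wedge\theta^k$ with signs from the definition of $[\cdot]$; since $[a]b$ is (up to sign and normalization) the cross product $a\times b$, the expression $\theta^t\wedge[\theta]\wedge\theta$ works out to a scalar multiple of $\theta^1\wedge\theta^2\wedge\theta^3$, and similarly the conjugate term gives the $\bar\theta$-cubic. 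Tracking the constants carefully yields $d(x^*\omega) = -3\,x^*\operatorname{Im}\Upsilon$, hence $d\omega = -3\operatorname{Im}(\Upsilon)$ by injectivity of $x^*$. The only real care needed is bookkeeping of the numerical factors coming from the $-2i$, $2i$ entries in \eqref{G2MC} and the factor $2$ in the normalization $|F_k|^2 = 1/2$.
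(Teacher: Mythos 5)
Your proposal is correct and follows essentially the same route as the paper: descent of $SU(3)$-invariant horizontal forms along $x$, the unitary moving frame and the local formula for $\omega$ for (i)--(ii), $\det A=1$ for (iv), and differentiating $2i\,\theta^t\wedge\bar\theta$ via the second structure equations for (iii). The only small imprecision is your stated reason for the $\kappa$-cancellation in (iii): in the paper it comes from skew-Hermiticity, $\kappa^t=-\bar\kappa$, rather than trace-freeness, though your a priori argument that the result must be basic also suffices.
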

\begin{proof}
i) Both sides are $G_2$-invariant, so we check equality at $1\in G_2$. Write $A \in \mathfrak{g}_2$ as in \eqref{G2Lie}. Then using $|f_k|=1/\sqrt{2}$
\begin{align*}
g_{can}(dx(A), dx(A)) &= g_{can}(-2iaf+2i\bar{a}\bar{f},-2iaf+2i\bar{a}\bar{f})\\&= 4\|a\|^2 = 4(\theta^t \circ \bar\theta)(A,A) 
\end{align*}
(the $(1,0)$ and $(0,1)$-subspaces are isotropic for the 
$\C$-bilinear extension.)\\
ii) By i) $\sqrt{2}\theta^i$ is an orthonormal basis of $(1,0)$-forms. Hence by~\eqref{omega-local}
\[
	x^*\omega = i\sqrt{2}\theta^i \wedge \sqrt{2} \bar\theta^i = 2i \theta^t \wedge \bar\theta.
\]
iv) $\theta_1\wedge \theta_2 \wedge \theta_3$ is invariant under $SU(3)$ since by \eqref{right-translation} for $A^{-1}=(a^{ij})$
\begin{align*}
	&(a^{11}\theta_1 + a^{12}\theta_2 + a^{13}\theta_3)
	\wedge
	(a^{21}\theta_1 + a^{22}\theta_2 + a^{23}\theta_3)
	\wedge
	(a^{31}\theta_1 + a^{32}\theta_2 + a^{33}\theta_3)\\
	=& \det(A^{-1}) \theta_1 \wedge \theta_2 \wedge \theta_3 = \theta_1 \wedge \theta_2 \wedge \theta_3.
\end{align*}
This proves the existence of $\Upsilon$. It is clearly a $(3,0)$-form.\\
iii) Using ${(\alpha\wedge \beta)^t=(-1)^{\alpha\beta}\beta^t\wedge\alpha^t}$, $\kappa^t=-\bar\kappa$, $[\theta]^t=-[\theta]$ and \eqref{second-structure1} we find
\begin{align*}
d(\theta^t \wedge \bar\theta) &= (d\theta)^t\wedge\bar\theta - \theta^t \wedge \overline{d\theta}\\
&= (-\kappa\wedge \theta + [\bar\theta]\wedge\bar\theta)^t\wedge \bar\theta - \theta^t \wedge (-\bar\kappa\wedge\bar\theta + [\theta]\wedge\theta)\\
&= \theta^t\wedge\kappa^t\wedge \bar\theta - \bar\theta^t\wedge[\bar\theta]^t\wedge\bar\theta + \theta^t\wedge\bar\kappa\wedge\bar\theta - \theta^t\wedge[\theta]\wedge \theta\\
&= -\theta^t\wedge [\theta]\wedge \theta + \overline{\theta^t\wedge [\theta]\wedge \theta}\\
&= 6(\theta_1\wedge\theta_2\wedge\theta_3 - \overline{\theta_1\wedge\theta_2\wedge\theta_3}) = 12i \Im(\theta_1\wedge\theta_2\wedge\theta_3)\qedhere
\end{align*}
\end{proof}

\section{Proof of Chern's Theorem}

\subsection{Comparision of almost complex structures}

Let $J$ be an arbitrary almost complex structure on $S^6$. Let
$$GL(3,\mathbb{C})\hookrightarrow F_J\xrightarrow{\pi} S^6$$
be the corresponding bundle of $J$-complex frames, with fiber at $y\in S^6$
\[
	u\in \pi^{-1}(y) = \Hom_\C((T_y S^6,J) , (\C^3,i)).
\]
We have a tautological $\mathbb{C}^3$-valued 1-form $\eta\in\Omega^1(F_J,\mathbb{C}^3)$ 
$$\eta(v)=u(d\pi(v)),\qquad \forall v\in T_u F_J.$$
Define $B_J$ by the pullback diagram
\[\xymatrix{
	B_J\ar[r]\ar[d]&F_J\ar[d]^\pi\\
	G_2\ar[r]_x & S^6,\\
}\]
a principal $SU(3)\times GL(3,\mathbb{C})$-bundle over $S^6$ whose elements are pairs $(g,u)$ with $x(g)=\pi(u)$. We have two submersions $B_J\rightarrow G_2$ and $B_J\rightarrow F_J$ along which we pull back the differential forms $\eta$ and $\theta$ to $B_J$.

\begin{proposition} There are unique smooth maps $r,s: B_J\rightarrow \C^{3\times 3}$ with
\begin{equation}\label{theta-eta}
\theta=r\eta+s\bar\eta,\quad\Rightarrow\quad\bar\theta=\bar s\eta+\bar r\bar\eta.
\end{equation}
Also, the matrix
$\begin{pmatrix}
r & s\\
\bar s & \bar r
\end{pmatrix}
$
has non-zero determinant.
\end{proposition}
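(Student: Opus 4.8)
The plan is to work fiberwise over a point $(g,u) \in B_J$ and use the two descriptions of the complexified cotangent space of $S^6$ at $y = x(g) = \pi(u)$. On one hand, the submersion $B_J \to G_2$ combined with \eqref{exact-G2S6-sequence} shows that $\theta^1,\theta^2,\theta^3,\bar\theta^1,\bar\theta^2,\bar\theta^3$ descend to a basis of $T_y^*S^6 \otimes \C$. On the other hand, the submersion $B_J \to F_J$ shows that the components $\eta^1,\eta^2,\eta^3$ of the tautological form span $T^{1,0}_J(T_y^*S^6)$ (since $u$ is a $\C$-linear isomorphism $(T_yS^6,J) \to (\C^3,i)$, the pullback $u^*$ sends the standard basis of $(\C^3)^*$ to a basis of the $(1,0)$-part), and hence $\eta^1,\eta^2,\eta^3,\bar\eta^1,\bar\eta^2,\bar\eta^3$ also form a basis of $T_y^*S^6 \otimes \C$. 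Both the $\theta$'s and the $\eta$'s are pulled back from the \emph{same} six-dimensional space $T_y^*S^6 \otimes \C$, so each $\theta^i$ is a unique complex-linear combination of the $\eta^j$ and $\bar\eta^j$: this is precisely the assertion that there are unique functions $r,s \colon B_J \to \C^{3\times 3}$ with $\theta = r\eta + s\bar\eta$. Smoothness of $r,s$ follows because they are the entries of the change-of-basis matrix between two smooth frames; equivalently, one solves a linear system with smoothly varying, invertible coefficient matrix by Cramer's rule.

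Once $\theta = r\eta + s\bar\eta$ is established, the second equation in \eqref{theta-eta} is obtained simply by complex conjugation, using that $\eta$ is the tautological (pullback of a) \emph{real} cotangent vector expressed in a complex basis, so $\overline{\eta} = \bar\eta$ componentwise and $\overline{r\eta + s\bar\eta} = \bar r \bar\eta + \bar s \eta$.

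For the determinant statement: the equation $\begin{pmatrix}\theta\\ \bar\theta\end{pmatrix} = \begin{pmatrix} r & s\\ \bar s & \bar r\end{pmatrix}\begin{pmatrix}\eta\\ \bar\eta\end{pmatrix}$ expresses one basis of the $6$-dimensional space $T_y^*S^6\otimes \C$ in terms of another basis of the same space — here I use that both $\{\theta^i,\bar\theta^i\}$ and $\{\eta^i,\bar\eta^i\}$ are bases, established in the previous paragraph. A change-of-basis matrix between two bases of a vector space is invertible, hence has nonzero determinant.

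The main obstacle, and the only point requiring genuine care, is verifying that $\eta^1,\eta^2,\eta^3$ together with their conjugates really do form a basis of $T_y^*S^6\otimes\C$ after pullback to $B_J$ — i.e.\ that the tautological form $\eta$ is pointwise "full rank" in the appropriate sense and that the $(1,0)$ and $(0,1)$ parts are complementary. This is immediate from the fact that $u \in \Hom_\C((T_yS^6,J),(\C^3,i))$ is an \emph{isomorphism}, so $u^* \colon (\C^3)^* \to (T_y^*S^6)^{1,0}$ is an isomorphism and the standard decomposition $(T_y^*S^6)\otimes\C = (T_y^*S^6)^{1,0}\oplus (T_y^*S^6)^{0,1}$ gives the direct sum. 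I would spell this out in one or two sentences and then invoke the elementary linear algebra above; no computation with the structure equations is needed for this proposition.
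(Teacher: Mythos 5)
Your proposal is correct and follows essentially the same route as the paper: identify $\eta^1,\eta^2,\eta^3,\bar\eta^1,\bar\eta^2,\bar\eta^3$ (equivalently $u^1,u^2,u^3$ and conjugates) and $\theta^1,\theta^2,\theta^3,\bar\theta^1,\bar\theta^2,\bar\theta^3$ as two bases of $(T_y^*S^6)\otimes\C$, expand one in the other, and read off $r,s$ as the blocks of an invertible change-of-basis matrix. Your added remarks on smoothness via Cramer's rule and on obtaining the second equation by conjugation are details the paper leaves implicit, not a different argument.
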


\begin{proof}
Let $(g,u)\in B_J$ with $y=x(g)=\pi(u)$. Thus $u\colon T_yS^6 \to \C^3$ is a $(J,i)$-complex linear isomorphism. In particular,
\begin{equation}\label{ubasis}
	u^1,u^2,u^3,\bar{u}^1,\bar{u}^2,\bar{u}^3 \in (T_y^*S_6)\otimes \C
\end{equation}
is a complex basis of $T^{1,0}_J (T^*S^6) \oplus T^{0,1}_J (T^*S^6)$. Hence we have
an expansion $\theta = ru+s\bar{u}$. Recall from \eqref{def-standard-ac} that $\theta^i_g, \bar\theta^i_g$ is a second basis of $(T^*S^6)\otimes \C$. We thus get a change of basis matrix with non-zero determinant.
\end{proof}

\subsection{The bundles $\mathbb{J}_1(\omega,S^6)$ and $\mathbb{J}_2(\omega,S^6)$}

Let $\mathbb{J}(M,\omega) \subset \End(TM)$ be the bundle of $\omega$-compatible almost complex structures on a smooth manifold $M$. Its fiber at $p\in M$ are all $J\colon T_pM \to T_pM$ with $J^2=-1$ and satisfying \eqref{omega-compatible} for $\omega|_{T_pM}$. Then
$$\mathbb{J}(M,\omega)=\cup_q\mathbb{J}_q(M,\omega)$$
where $\mathbb{J}_q(M,\omega)\subset\mathbb{J}(M,\omega)$ is the subbundle of almost complex structures of $\omega$-index $(2n-2q,2q)$.
Here the dimension of $M$ is $2n$. Thus, in the case $M=S^6$ we get
$$\mathbb{J}(S^6,\omega)=\mathbb{J}_0(S^6,\omega)\cup\mathbb{J}_1(S^6,\omega)\cup\mathbb{J}_2(S^6,\omega)\cup\mathbb{J}_3(S^6,\omega).$$

Because $S^6$ is connected an the $\omega$-index is a continuous pair of integers, every $\omega$-compatible almost complex structure $J$ is a section of one of these subbundles. Two cases can be ruled out topologically:

\begin{lemma}\label{lemma:j1-j2} $\mathbb{J}_1(\omega,S^6)$ and $\mathbb{J}_2(\omega,S^6)$ do not admit a global continuous section.
\end{lemma}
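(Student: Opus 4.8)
The plan is to obstruct such sections via characteristic classes, using that an $\omega$-compatible almost complex structure of a fixed $\omega$-index reduces the structure group of $TS^6$ to a pseudo-unitary group. Concretely, a section $J$ of $\mathbb J_q(S^6,\omega)$ equips $TS^6$ with the structure of a complex vector bundle $(TS^6,J)$ together with a non-degenerate $J$-Hermitian form $g_J - i\omega$ of signature $(3-q,q)$; since $U(3-q,q)$ deformation retracts onto its maximal compact $U(3-q)\times U(q)$, the complex bundle $(TS^6,J)$ splits (up to isomorphism) as $E_+\oplus E_-$ with $E_+$ of rank $3-q$ and $E_-$ of rank $q$, both carrying honest (positive- or negative-definite) Hermitian metrics. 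For $q=1$ this gives $TS^6\cong_{\C} E_+\oplus L$ with $E_+$ of rank $2$ and $L$ a complex line bundle; for $q=2$, $TS^6\cong_{\C} L\oplus E_-$ with $L$ a line bundle and $E_-$ of rank $2$. In either case I would extract a complex line bundle summand of $(TS^6,J)$.

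Next I would compute Chern classes. Since $H^2(S^6;\Z)=0$, any complex line bundle $L$ over $S^6$ is trivial, so in the $q=1$ case $TS^6\cong_\C E_+\oplus \underline{\C}$, hence as a real bundle $TS^6\cong E_+^{\R}\oplus \underline{\R^2}$; comparing top Chern/Euler classes, $e(TS^6)=e(E_+^\R)\cdot e(\underline{\R^2})=0$. But $e(TS^6)$ evaluated on the fundamental class is $\chi(S^6)=2\neq 0$, a contradiction. The $q=2$ case is symmetric: $TS^6\cong_\C \underline{\C}\oplus E_-$ (after noting $L$ is trivial), again forcing $e(TS^6)=0$ and the same contradiction. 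One subtlety is orientation: $J$ may induce on $TS^6$ either the standard orientation or its opposite (depending on the parity of $q$), but this only changes $e(TS^6)$ by a sign, and $\pm 2\neq 0$ regardless, so the argument is unaffected. I should also make sure to justify the splitting $TS^6\cong_{\C}E_+\oplus E_-$ carefully — it follows because the bundle of $\omega$-compatible $J$'s of index $(6-2q,2q)$ is a fiber bundle with fiber $U(3)/(U(3-q)\times U(q))$-type homogeneous space, equivalently the structure group reduces from $Sp(6,\R)$ to $U(3-q,q)$ and then compactly to $U(3-q)\times U(q)$.

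An alternative, perhaps cleaner, packaging avoids the splitting: a section $J$ of $\mathbb J_q(S^6,\omega)$ already makes $(TS^6,J)$ a rank-$3$ complex bundle, so $c_3(TS^6,J)\in H^6(S^6;\Z)$ is defined and equals $\pm e(TS^6)=\pm 2$ (sign depending on whether $J$ induces the standard orientation). On the other hand, the Hermitian form of signature $(3-q,q)$ gives an isomorphism of $(TS^6,J)$ with its conjugate-dual twisted appropriately; more simply, the definite summands $E_\pm$ show $c(TS^6,J)=c(E_+)c(E_-)$ with $c_1(E_\pm)=0$ (as $H^2=H^4=0$ is false — $H^4(S^6)=0$ is true, but I only need $c_1$), giving $c_3 = c_1(E_+)c_2(E_-) + \ldots$, and in the cases $q=1,2$ at least one factor is a line bundle with vanishing $c_1$, forcing $c_3=0$. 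Either way the contradiction is $0=c_3(TS^6,J)=\pm\chi(S^6)=\pm2$.

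The main obstacle I anticipate is not the topology but being careful about what exactly a ``section of $\mathbb J_q$'' reduces the structure group to, and pinning down the orientation/sign so that the Euler class computation is unambiguous; the homotopy-theoretic splitting of $U(3-q,q)$-bundles into definite pieces is standard but should be stated explicitly since it is the crux of why a line-bundle summand (hence a vanishing Euler class) appears. Once that is in place, the proof is a two-line Chern-class computation using $H^2(S^6;\Z)=0$ and $\chi(S^6)=2$.
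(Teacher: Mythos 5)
Your proposal is correct and is essentially the paper's own argument: both split $TS^6$ along the definite subspaces of the indefinite form $g_J=\omega(\cdot,J\cdot)$ and derive a contradiction from $e(TS^6)\neq 0$ together with the vanishing of Euler classes of the summands over $S^6$ (since $H^2(S^6;\Z)=H^4(S^6;\Z)=0$). The paper simply phrases the splitting as a real decomposition $TS^6=E_4\oplus E_2$ and applies the Whitney sum formula directly, so your complex-bundle and Chern-class packaging is an equivalent reformulation rather than a different route.
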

\begin{proof}
Assume that $J$ is a continuous section of $\mathbb{J}_1(S^6,\omega)$ or of $\mathbb{J}_2(S^6,\omega)$. Then the positive and negative definite subspaces of $g=\omega(\cdot,J\cdot)$ yield a decomposition
$$TS^6=E_4\oplus E_2$$
into two vector subbundles of ranks $4$ and $2$. However, as is well know, the Euler class (and characteristic) of $TS^6$ is nontrivial: $e(TS^6)\not=0$. On the other hand, rank $4$ and rank $2$ vector bundles over $S^6$ have trivial Euler classes, since $H^2(S^6,\mathbb{Z})=H^4(S^6,\mathbb{Z})=0$. Using the formula for the Euler classes of the Whitney sum one obtains the contradiction
\[
0\not=e(TS^6)=e(E_4)\cup e(E_2)=0.\qedhere
\]
\end{proof}

\subsection{Chern's identity}

Putting \eqref{theta-eta} into Proposition~\ref{prop:str-eq-cor},~ii) and using that $\eta$ has $J$-type $(1,0)$ gives
\begin{equation}\label{omega11}
\omega_J^{1,1}=2i \eta^t\wedge ( r^t\bar r- \bar s^t s)\bar\eta.
\end{equation}
The assumption that $J$ is $\omega$-compatible means that $\omega=\omega_J^{1,1}$ has $J$-type $(1,1)$.
%

\begin{proposition}
For any integrable $\omega$-compatible complex structure on $S^6$ we have \textbf{Chern's identity}
\begin{equation}\label{ChernIdentity}
\det (\bar s)=\det (r).
\end{equation}
\end{proposition}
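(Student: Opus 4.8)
The plan is to exploit integrability of $J$ together with the structure equation \eqref{second-structure1} for $d\theta$. Since $J$ is integrable, $d\eta$ has no $(0,2)$-component; write $d\eta^i = \alpha^i_j\wedge\eta^j + \beta^i_j\wedge\bar\eta^j$ with the $\beta$'s being $(1,1)$, i.e.\ $d\eta \equiv 0 \pmod{\eta,\ \eta\wedge\bar\eta}$, and in particular $d\eta \wedge \bar\eta^1\wedge\bar\eta^2\wedge\bar\eta^3 = 0$ (no pure $\bar\eta^1\wedge\bar\eta^2\wedge\bar\eta^3$ term appears on a $3$-manifold's worth of $(0,1)$-forms). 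Now differentiate the relation $\theta = r\eta + s\bar\eta$ from \eqref{theta-eta}. On the one hand $d\theta$ is computed by Leibniz from the right-hand side; on the other hand it equals $-\kappa\wedge\theta + [\bar\theta]\wedge\bar\theta$ by \eqref{second-structure1}. Substitute $\theta = r\eta+s\bar\eta$ and $\bar\theta = \bar s\eta + \bar r\bar\eta$ into both expressions and then compare the coefficient of the top $(0,3)$-form $\bar\eta^1\wedge\bar\eta^2\wedge\bar\eta^3$.

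The key observation is that $\kappa\wedge\theta$ contributes to the $(0,3)$-part only through the $\bar s\bar\eta$-type pieces, and crucially $\omega$-compatibility has already been used in \eqref{omega11} to force $r^t\bar r = \bar s^t s$; this symmetric-matrix identity is what will make the $\kappa$-term and the $ds\wedge\bar\eta$-term either cancel or combine into an exact derivative of a determinant. Concretely, I expect the $(0,3)$-component of the identity $d(r\eta+s\bar\eta) = -\kappa\wedge(r\eta+s\bar\eta) + [\bar s\eta+\bar r\bar\eta]\wedge(\bar s\eta+\bar r\bar\eta)$ to reduce, after using $d\eta\wedge\bar\eta^1\wedge\bar\eta^2\wedge\bar\eta^3=0$ and the wedge identity for $[\,\cdot\,]$ (so that $[\bar r\bar\eta]\wedge\bar r\bar\eta = \det(\bar r)\,\bar\eta^1\wedge\bar\eta^2\wedge\bar\eta^3$ up to a universal constant, exactly as in the proof of Proposition~\ref{prop:str-eq-cor}iv), to an equation of the schematic form
\[
\bigl(d\log\det s\bigr)\wedge(\cdots) + (\text{trace of }\kappa)\wedge(\cdots) + \bigl(\det\bar r - \det\bar s\cdot(\text{something})\bigr)(\cdots) = 0.
\]
Matching this with the conjugate relation and using $\operatorname{tr}\kappa = 0$ (since $\kappa\in\mathfrak{su}(3)$) should isolate $\det\bar s = \det r$.

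The main obstacle is the bookkeeping of which terms survive into the $(0,3)$-component: one must carefully separate, in each product $\kappa\wedge\theta$, $[\bar\theta]\wedge\bar\theta$, and $ds\wedge\bar\eta$, the part that is purely in $\bar\eta^1\wedge\bar\eta^2\wedge\bar\eta^3$ from the part that still contains an $\eta$. The integrability hypothesis enters precisely to kill the $d\bar\eta$-contributions that would otherwise pollute the $(0,3)$-piece, and the $\omega$-compatibility identity $r^t\bar r = \bar s^t s$ enters to convert the leftover algebraic terms into the determinant identity; getting both to cooperate in the same wedge-degree is the delicate point. Once the correct component is extracted, the identity \eqref{ChernIdentity} should fall out as the vanishing of a single scalar coefficient.
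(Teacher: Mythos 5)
Your strategy---differentiating the frame relation $\theta=r\eta+s\bar\eta$ and comparing with the structure equation \eqref{second-structure1}---is not the route the paper takes, and as written it contains two concrete errors that break it. First, $\omega$-compatibility does \emph{not} force $r^t\bar r=\bar s^t s$. Equation \eqref{omega11} identifies $H=r^t\bar r-\bar s^t s$ with half the matrix of the non-degenerate form $g_J=\omega(\cdot,J\cdot)$; compatibility only says that $\omega$ has vanishing $(2,0)+(0,2)$-part, and the concluding step of the proof of Theorem~\ref{thm:Chern} relies precisely on $H$ being \emph{definite}, hence nonzero. So the cancellation mechanism you invoke does not exist. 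Second, integrability gives $d\eta^i\wedge\eta^1\wedge\eta^2\wedge\eta^3=0$ (no $(0,2)$-component of $d\eta$), \emph{not} $d\eta^i\wedge\bar\eta^1\wedge\bar\eta^2\wedge\bar\eta^3=0$: wedging a $2$-form with the top $(0,3)$-form extracts its $(2,0)$-part, which does not vanish for integrable $J$. Finally, determinants of $r$ and $s$ cannot be isolated from a single $\C^3$-valued $1$-form identity by wedging with a $(0,3)$-form---the degrees do not match---and your final displayed equation is avowedly schematic, so no actual identity is derived.

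The paper works instead at the level of $3$-forms, where the determinants appear for free: substituting \eqref{theta-eta} into $x^*\Upsilon=8\,\theta^1\wedge\theta^2\wedge\theta^3$ gives $\Upsilon^{3,0}_J=8\det(r)\,\eta^1\wedge\eta^2\wedge\eta^3$ and $\Upsilon^{0,3}_J=8\det(s)\,\bar\eta^1\wedge\bar\eta^2\wedge\bar\eta^3$, hence $(\bar\Upsilon)^{3,0}_J=8\det(\bar s)\,\eta^1\wedge\eta^2\wedge\eta^3$. Since $\omega$ has $J$-type $(1,1)$ by compatibility and $J$ is integrable, Lemma~\ref{lem:integrable} gives $(d\omega)^{3,0}_J=0$; combined with the nearly K\"ahler identity $d\omega=-3\Im(\Upsilon)$ of Proposition~\ref{prop:str-eq-cor} this yields $0=(\Upsilon-\bar\Upsilon)^{3,0}_J=8\left(\det(r)-\det(\bar s)\right)\eta^1\wedge\eta^2\wedge\eta^3$, which is \eqref{ChernIdentity}. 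Note that the second structure equation enters only through the already-established formula for $d\omega$; no differentiation of $r$ or $s$ and no connection form for $J$ is needed. If you want to rescue your idea, apply your ``compute $d$ of both sides'' plan to the $3$-form $2i\,\theta^t\wedge\bar\theta=x^*\omega$ rather than to $\theta$ itself---that is exactly the paper's argument.
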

\begin{proof}
Putting \eqref{theta-eta} into Proposition~\ref{prop:str-eq-cor},~iii) gives
\begin{equation}\label{Upsilon}
\Upsilon_J^{3,0}=8\det (r)\eta_1\wedge\eta_2\wedge\eta_3,\qquad
\Upsilon_J^{0,3}=8\det (s)\bar\eta_1\wedge\bar\eta_2\wedge\bar\eta_3.
\end{equation}
When $J$ is integrable, Lemma~\ref{lem:integrable} implies that $d\omega$ has type $(2,1)+(1,2)$. Hence its $(3,0)$-part with respect to $J$ vanishes. Recalling also $d\omega=3\operatorname{Im}(\Upsilon)$ we calculate
\begin{align*}
0&=(d\omega)^{3,0}_J=(3\operatorname{Im}(\Upsilon))^{3,0}_J=\frac{3}{2i} \left(\Upsilon-\bar\Upsilon\right)^{3,0}_J\\
&\overset{\eqref{Upsilon}}{=}12i\left(\det(\bar s)-\det (r)\right)\eta_1\wedge\eta_2\wedge\eta_3\qedhere
\end{align*}
\end{proof}

\subsection{Proof of Chern's Theorem}

Before giving the proof, recall that for two Hermitian matrices $A, B$ we say $A>B$ (resp.~$A\geq B$) if $A-B$ has only positive (resp.~non-negative) eigenvalues. $A>B$ is equivalent to
\begin{equation}\label{AgB} 
\langle Ax, x\rangle > \langle Bx,x \rangle,\qquad \forall x\neq 0.
\end{equation}
For example, for an arbitrary matrix $C$ we have $C^*C\geq 0$ and $CC^*\geq 0$. Moreover $C^*C>0$ and $CC^*>0$ precisely when $C$ is invertible.

\begin{lemma}\label{lem:determinant}
For $A>B>0$ we have $\det A > \det B > 0$.
\end{lemma}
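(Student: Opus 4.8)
The plan is to reduce the statement $\det A > \det B > 0$ for Hermitian matrices $A > B > 0$ to a one-parameter monotonicity argument. First I would observe that $B > 0$ immediately gives $\det B > 0$, since the determinant of a positive-definite Hermitian matrix is the product of its (strictly positive) eigenvalues; the same reasoning applied to $A$ (which satisfies $A > B > 0$, hence $A > 0$) gives $\det A > 0$ as well, so the real content is the strict inequality $\det A > \det B$.

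For that, I would interpolate: set $C(t) \coloneqq B + t(A - B)$ for $t \in [0,1]$, so that $C(0) = B$ and $C(1) = A$. Each $C(t)$ is Hermitian, and since $A - B > 0$ and $B > 0$ one checks via \eqref{AgB} that $\langle C(t)x, x\rangle \geq \langle Bx, x\rangle > 0$ for $x \neq 0$, so $C(t) > 0$ and in particular $C(t)$ is invertible for all $t \in [0,1]$. Now consider the smooth function $h(t) \coloneqq \det C(t)$. Using Jacobi's formula $\frac{d}{dt}\det C(t) = \det C(t) \cdot \operatorname{tr}\!\bigl(C(t)^{-1}(A - B)\bigr)$, and noting that $C(t)^{-1} > 0$ (the inverse of a positive-definite Hermitian matrix is positive-definite) while $A - B > 0$, the trace of the product $C(t)^{-1}(A-B)$ is strictly positive. (For this one can write $C(t)^{-1}(A-B)$ as $C(t)^{-1/2}\bigl(C(t)^{-1/2}(A-B)C(t)^{-1/2}\bigr)C(t)^{1/2}$, which is similar to the positive-definite matrix $C(t)^{-1/2}(A-B)C(t)^{-1/2}$, hence has strictly positive trace.) Therefore $h'(t) > 0$ on $[0,1]$, and since $h > 0$ throughout, integrating gives $h(1) > h(0)$, i.e.\ $\det A > \det B$.

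The step I expect to be the only mild obstacle is justifying that $\operatorname{tr}\!\bigl(C(t)^{-1}(A-B)\bigr) > 0$: the product of two positive-definite Hermitian matrices need not be Hermitian, so one cannot just say "its eigenvalues are positive" without the similarity trick above (or, equivalently, the observation that $\operatorname{tr}(PQ) = \operatorname{tr}(P^{1/2}QP^{1/2}) > 0$ when $P, Q > 0$ and $Q \neq 0$). Everything else — smoothness of $h$, Jacobi's formula, the fact that $C(t)$ stays positive-definite — is routine. An alternative to the calculus argument, if one prefers to avoid Jacobi's formula, is to simultaneously diagonalize: there is an invertible $S$ with $S^* B S = I$ and $S^* A S = \operatorname{diag}(\lambda_1, \ldots, \lambda_n)$; then $A > B$ forces every $\lambda_i > 1$, so $\det A / \det B = (\det S^* A S)/(\det S^* B S) = \prod \lambda_i > 1$, giving the claim directly.
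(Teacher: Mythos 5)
Your proof is correct. Your primary argument --- interpolating along $C(t)=B+t(A-B)$ and differentiating $\det C(t)$ via Jacobi's formula, with the $\operatorname{tr}(PQ)=\operatorname{tr}(P^{1/2}QP^{1/2})>0$ observation handling the non-Hermitian product --- is a genuinely different route from the paper's, and all the steps check out, including the verification that $C(t)>0$ on $[0,1]$ and the careful justification of the positivity of the trace. The paper instead uses exactly the congruence-normalization trick you sketch as your ``alternative'': substituting $x=A^{-1/2}y$ in \eqref{AgB} shows $A>B>0$ is equivalent to $E>A^{-1/2}BA^{-1/2}>0$, so the Hermitian matrix $C=A^{-1/2}BA^{-1/2}$ has all eigenvalues in $(0,1)$ and hence $\det(A^{-1})\det(B)=\det(C)\in(0,1)$; this is the same idea as your simultaneous diagonalization, just normalizing $A$ rather than $B$ to the identity, and it avoids any calculus. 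The interpolation argument is slightly heavier machinery for a $3\times 3$ statement but generalizes readily (e.g.\ to log-concavity/monotonicity statements along matrix paths); the paper's two-line eigenvalue argument is the more economical choice here. Either version would serve.
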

\begin{proof}
By replacing $x=A^{-1/2}y$ in \eqref{AgB} we see that $A>B>0$ is equivalent to $E > A^{-1/2}BA^{-1/2}>0$ for the identity matrix $E$. Let $C\coloneqq A^{-1/2}BA^{-1/2}>0$ have eigenvalues $\lambda_i>0$. Then $E-C>0$ has eigenvalues $1-\lambda_i > 0$. Hence $\det(A^{-1})\det(B)=\det(C)\in (0,1)$.
\end{proof}

\begin{proof}[Proof of Theorem~\textup{\ref{thm:Chern}}]
Assume by contradiction that $J$ is both integrable and $\omega$-compatible. Then Lemma~\ref{lemma:j1-j2} shows that $J$ must be a section of $\mathbb{J}_0(S^6,\omega)$ or of $\mathbb{J}_3(\omega, S^6)$. Hence the bilinear form $\omega(\cdot,J\cdot)$, which according to \eqref{omega11} is represented by twice the matrix $H\coloneqq r^t\bar r-\bar s^t s$, is either positive definite or negative definite.

Assume that $H$ is positive definite, so $r^t\bar{r}>\bar{s}^ts$. Since $\bar s^t s\geq 0$ this implies
$r^t\bar{r} \geq H > 0$ and so $r^t\bar{r}$ is invertible. Hence $0\neq \det(r) = \det(\bar s)$
by \eqref{ChernIdentity} and so $r^t\bar{r}>\bar{s}^ts>0$, contradicting Lemma~\ref{lem:determinant}. The case when $H$ is negative definite is analogous. So we have reached a contradiction in every case.
\end{proof}

\begin{appendix}

\section{Preliminaries}

\subsection{Linear algebra}

Let $(V, J)$ be a complex vector space. The complexification $V_\C\coloneqq V\otimes_\R \C$ carries two commuting complex structures $J_\C \coloneqq J\otimes\id_\C$, $i=1\otimes i$ which gives a splitting into the $(\pm i)$-eigenspaces of $J_\C$
\begin{equation}\label{splitting}
	V_\C = V^{1,0}\oplus V^{0,1}.
\end{equation}
By convention $V_\C, V^{1,0}, V^{0,1}$ are equipped with the complex structure $i$. We identify $V\hookrightarrow V_\C$ by $v\mapsto v\otimes 1$ with image the \emph{real subspace}, the subspace of $V_\C$ fixed by complex conjugation $\bar{\;}\colon V_\C\to V_\C$.

These definitions apply to $V=\R^{2n}$ or to the tangent space $T_pM$ at a point of an almost complex manifold $(M,J)$. Note that complex structures on $V$ are equivalent to complex structures $J^*$ on the dual space. Then $(V^*)^{1,0}$ is isomorphic to the $(J,i)$-complex linear maps $V \to \C$ and similarly $(V^*)^{0,1}$ are the complex anti-linear maps. However, both determine complex-linear maps $V_\C$ with respect to the complex structure $i$. We may decompose the complex $n$-forms as
\[
	\Lambda^n(V^*) = \bigoplus_{p+q=n}\Lambda^{p,q}(V^*),\qquad \Lambda^{p,q}(V^*)=\Lambda^p(V^*)^{1,0} \otimes \Lambda^q(V^*)^{0,1}
\]
and we denote the corresponding projection by $\alpha \mapsto \alpha^{p,q}$.

Conversely, a splitting $V_\C=V^{1,0}\oplus V^{0,1}$ of the complexification of a real vector space $V$ into two complex subspaces satisfying $\overline{V^{1,0}}=V^{0,1}$ defines a unique complex structure on $V$ with given type decomposition: decompose $v\in V$ as $v\otimes 1=v^{1,0} + v^{0,1}$ and define $J(v)=iv^{1,0} - iv^{0,1}$ (which again belongs to the real subspace).

Now suppose that $g$ is a Euclidean metric on $V$ such that $J$ is $g$-orthogonal. Then we obtain a Hermitian form on $V_\C$ by
\[
	h(v_1\otimes z_1, v_2\otimes z_2) \coloneqq g(v_1,v_2)\otimes z_1\overline{z_2},
\]
for which $V^{1,0}\oplus V^{0,1}$ is orthogonal. One may also complexify $g$ to a real $\C$-bilinear form $g_\C(v_1\otimes z_1, v_2\otimes z_2)=g(v_1,v_2)\otimes z_1z_2$ for which $V^{1,0}$ and $V^{0,1}$ are isotropic. Since $J$ is skew-symmetric for $g$ we have also a $2$-form on $V$
\begin{equation}
\omega(X,Y)=g(JX,Y).
\end{equation}
Let $\{z_\alpha\}_{\alpha=1,\ldots, \dim_\C V}$ be a complex basis of $V^{1,0}$ with dual basis $z^\alpha$. Then $\bar{z}_\alpha$ is a basis of $V^{0,1}$. Letting $h_{\alpha\bar\beta}\coloneqq h(z_\alpha,z_\beta)=g_\C(z_\alpha,\bar{z}_\beta)$ the complexification of $\omega$ is
\begin{align}\label{omega-local}
 \omega &= ih_{\alpha\bar\beta} z^\alpha\wedge \bar{z}^\beta.
\end{align}

\subsection{Almost complex manifolds}

An almost complex structure is an endomorphism $J\colon TM\to TM$ satisfying $J^2=-1$.
For example, an complex manifold is almost complex, since the derivative of local holomorphic coordinates
gives real linear isomorphisms $\C^n \to T_pM$ along which we may transport the standard complex structure $i$ to get $J$. An almost complex structure of this type is called \emph{integrable}.

Let $\mathcal{A}^{p,q}(M)$ be the global sections of the bundle $\Lambda^{p,q}(T^*M)$.

\begin{lemma}\label{lem:integrable}
Suppose $J$ is integrable and let $\eta \in \mathcal{A}^{p,q}(M)$. Then
\[
d\eta \in \mathcal{A}^{p+1,q}(M)\oplus \mathcal{A}^{p,q+1}(M).
\]
\end{lemma}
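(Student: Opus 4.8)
The plan is to reduce the statement to the standard fact that an integrable almost complex structure is locally equivalent to the standard complex structure on $\C^n$, and then to observe that the type decomposition of forms and the behaviour of $d$ are local and invariant under biholomorphisms. First I would note that both hypotheses and conclusion are local: whether $d\eta$ lies in $\mathcal{A}^{p+1,q}\oplus\mathcal{A}^{p,q+1}$ can be checked in a neighbourhood of each point, so it suffices to work in a chart. Since $J$ is integrable, by definition (as recalled just above the lemma) there are local coordinates in which $J$ is the pushforward of the standard $i$ on an open subset of $\C^n$; equivalently, there exist local complex-valued functions $w^1,\dots,w^n$ whose differentials $dw^1,\dots,dw^n$ span $(T^*M)^{1,0}$ pointwise.

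Next I would write $\eta$ in these holomorphic coordinates. Locally any $(p,q)$-form is a sum of terms $a\, dw^{i_1}\wedge\cdots\wedge dw^{i_p}\wedge d\bar w^{j_1}\wedge\cdots\wedge d\bar w^{j_q}$ with $a$ a smooth complex-valued function. Applying $d$ and using $d(dw^i)=d(d\bar w^j)=0$, only $da$ survives, and $da = \sum_k (\partial a/\partial w^k)\,dw^k + \sum_k (\partial a/\partial \bar w^k)\,d\bar w^k$ is a sum of a $(1,0)$-form and a $(0,1)$-form. Wedging with the remaining $(p,q)$-type factor lands in $\mathcal{A}^{p+1,q}\oplus\mathcal{A}^{p,q+1}$. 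Summing over the terms of $\eta$ and then over a cover gives the global statement.

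The main obstacle — really the only substantive input — is the existence of local holomorphic coordinates, i.e. that the derivative of a holomorphic chart identifies $J$ with the standard complex structure and hence that $dw^k$ are of pure type $(1,0)$. In the present paper this is available essentially by fiat, since "integrable" is \emph{defined} (in the paragraph preceding the lemma) as "locally isomorphic via holomorphic coordinates to $(\C^n, i)$"; one does not need the Newlander--Nirenberg theorem here. The remaining care is purely bookkeeping: checking that the coordinate type decomposition agrees with the intrinsic one from the splitting $V_\C = V^{1,0}\oplus V^{0,1}$, which is immediate because a biholomorphism intertwines the complex structures and therefore their $\pm i$-eigenspace decompositions on cotangent spaces.
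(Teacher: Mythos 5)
Your proof is correct and follows essentially the same route as the paper: use the local holomorphic coordinates provided by the definition of integrability to get an exact frame of $(1,0)$- and $(0,1)$-forms, expand $\eta$ in this frame, and note that applying $d$ only differentiates the coefficient functions, whose differentials split into $(1,0)$ and $(0,1)$ parts. The only difference is that you spell out the (correct) bookkeeping about locality and the compatibility of the coordinate type decomposition with the intrinsic one, which the paper leaves implicit.
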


\begin{proof}
When $J$ is integrable we may use the coordinates to get an exact local frame $dz^\alpha, d\bar{z}^{\bar\beta}$ of the $(1,0)$ and $(0,1)$-forms. By definition a $(p,q)$-form has a local expression
\[
	\eta = \eta_{\alpha_1\cdots\alpha p, \bar\beta_1\cdots\bar\beta_q} dz^{\alpha_1}\cdots dz^{\alpha_p} d\bar{z}^{\bar\beta_1}\cdots d\bar{z}^{\bar\beta_p}.
\]
Now apply $d$ and the fact that for a complex-valued function $f$ we have a splitting $df = \frac{\partial f}{\partial z^\alpha} dz^\alpha + \frac{\partial f}{\partial \bar{z}^\alpha} d\bar{z}^\alpha$ into the complex linear and anti-linear part.
\end{proof}

The converse of the lemma is the difficult Newlander-Nirenberg Theorem.

\subsection{Lie groups}

\begin{theorem}[Lie's Second Theorem]\label{LieSecondTheorem}
Let $G,H$ be Lie groups with $G$ simply connected. Taking the derivative at the unit sets up a bijection between Lie homomorphisms $G\to H$ and Lie algebra homomorphisms $\mathfrak{g}\to\mathfrak{h}$.
\end{theorem}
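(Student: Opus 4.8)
The plan is to establish the two directions separately. Injectivity of the derivative map $f\mapsto df_e$ on homomorphisms out of $G$ is elementary and I would dispatch it first: if $f_1,f_2\colon G\to H$ are Lie homomorphisms with $(df_1)_e=(df_2)_e$, then naturality of the exponential map gives $f_i\circ\exp_G=\exp_H\circ(df_i)_e$, so $f_1$ and $f_2$ agree on $\exp_G(\mathfrak g)$, which contains a neighbourhood of the identity. Since $G$ is connected, such a neighbourhood generates $G$, hence $f_1=f_2$. This uses only connectedness, not simple connectedness.

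For surjectivity—given a Lie algebra homomorphism $\varphi\colon\mathfrak g\to\mathfrak h$, construct a Lie homomorphism $f\colon G\to H$ with $df_e=\varphi$—I would use the graph trick. Because $\varphi$ respects brackets, the graph $\mathfrak k\coloneqq\{(X,\varphi(X)):X\in\mathfrak g\}$ is a Lie subalgebra of $\mathfrak g\oplus\mathfrak h=\operatorname{Lie}(G\times H)$. By the subgroup–subalgebra correspondence (the integral subgroup theorem), there is a unique connected immersed Lie subgroup $K\subseteq G\times H$ with $\operatorname{Lie}(K)=\mathfrak k$. The first projection restricts to a Lie homomorphism $p\colon K\to G$ whose derivative at the identity is the isomorphism $(X,\varphi(X))\mapsto X$; hence $p$ is a local diffeomorphism, its image is an open—and by connectedness of $G$, all—of $G$, and $p$ is a covering map. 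Here is where simple connectedness of $G$ enters: it forces $p$ to be an isomorphism of Lie groups. Then $f\coloneqq q\circ p^{-1}$, with $q\colon K\to H$ the second projection, is a Lie homomorphism, and $df_e$ sends $X$ to the $\mathfrak h$-component of $(p^{-1})_*X=(X,\varphi(X))$, namely $\varphi(X)$, as required.

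The main obstacle is the input from the Frobenius integrability theorem hidden in the integral subgroup theorem: one must produce the maximal integral manifold through the identity of the left-invariant distribution on $G\times H$ spanned by $\mathfrak k$, and verify that this integral manifold is closed under multiplication and inversion (so that it is genuinely a subgroup, with the subspace topology refined to an immersed submanifold structure). Everything downstream of that—recognizing $p$ as a covering, applying simple connectedness, and reading off the derivative—is formal. An alternative route avoiding the subgroup language would run the same argument through Maurer–Cartan forms: the $\mathfrak h$-valued $1$-form $\varphi\circ\phi_G$ on $G$ satisfies the Maurer–Cartan equation, so by the Darboux-type integration theorem for $G$ simply connected there is a unique $f\colon G\to H$ with $f(e)=e$ and $f^*\phi_H=\varphi\circ\phi_G$, and a short computation with $\phi_H$ shows $f$ is a homomorphism; but this merely relocates the same Frobenius-theoretic core.
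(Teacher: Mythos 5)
Your proof is correct: the injectivity argument via naturality of the exponential and generation by an identity neighbourhood, and the surjectivity argument via integrating the graph subalgebra $\mathfrak k\subset\mathfrak g\oplus\mathfrak h$ to an immersed subgroup $K$ whose first projection is a covering of the simply connected $G$, is the standard textbook proof, and you correctly isolate the Frobenius-theoretic input. Note, however, that the paper states Lie's Second Theorem only as a background fact in its appendix and offers no proof of its own, so there is nothing to compare against; your argument simply supplies the standard missing justification.
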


\begin{theorem}[Lie's Third Theorem]\label{LieThirdTheorem}
For every finite-dimensional real Lie algebra $\mathfrak{g}$ there exists a unique simply-connected Lie group $G$ whose Lie algebra is $\mathfrak{g}$. Any connected Lie group with that Lie algebra is isomorphic to $G/\Gamma$ for a discrete subgroup $\Gamma \subset Z(G)$ of the center.
\end{theorem}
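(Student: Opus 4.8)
The plan is to break the statement into three parts—existence of a simply-connected integrating group, its uniqueness, and the description of an arbitrary connected group with Lie algebra $\mathfrak{g}$ as a central quotient of it—and to reduce the last two parts to Lie's Second Theorem (Theorem~\ref{LieSecondTheorem}), which I am free to assume. The genuine content therefore lies in existence, and that is where I expect the main difficulty.

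For existence I would first invoke Ado's Theorem to obtain a faithful finite-dimensional representation, that is, a Lie algebra monomorphism $\mathfrak{g}\hookrightarrow \mathfrak{gl}(n,\R)$. Identifying $\mathfrak{g}$ with its image, the left-translates of $\mathfrak{g}$ define a smooth left-invariant distribution $\mathcal{D}$ on $\GL(n,\R)$; since $\mathfrak{g}$ is closed under the bracket, $\mathcal{D}$ is involutive, so the Frobenius Theorem produces a foliation. By the integral subgroup theorem the maximal leaf $H$ through the identity is an immersed Lie subgroup, and by construction its Lie algebra is $\mathfrak{g}$. Taking $G\coloneqq \tilde H$ to be the universal cover, equipped with the unique group structure making the covering map a homomorphism, yields a simply-connected Lie group whose Lie algebra is $\mathfrak{g}$.

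Uniqueness is then formal: if $G_1,G_2$ are both simply-connected with Lie algebra $\mathfrak{g}$, applying Lie's Second Theorem to $\id_{\mathfrak{g}}$ in each direction gives homomorphisms $G_1\to G_2$ and $G_2\to G_1$; their composites integrate $\id_{\mathfrak{g}}$, so by the injectivity clause of that theorem they equal the respective identity maps, whence $G_1\cong G_2$. For the quotient description, let $H$ be any connected Lie group with Lie algebra $\mathfrak{g}$. Integrating $\id_{\mathfrak{g}}$ via Lie's Second Theorem produces a homomorphism $p\colon G\to H$ with $dp_e=\id$, hence a local diffeomorphism at $e$; its image is an open, therefore all of the connected $H$, and its kernel $\Gamma$ is discrete, so $p$ is a covering homomorphism. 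Finally $\Gamma$ is central because $g\mapsto g\gamma g^{-1}$ is a continuous map of the connected $G$ into the discrete $\Gamma$, hence constant. Thus $H\cong G/\Gamma$ with $\Gamma\subset Z(G)$ discrete.

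The main obstacle is Ado's Theorem itself, whose proof is substantial: it relies on the structure theory of Lie algebras (Levi decomposition, a separate treatment of the nilradical via its universal enveloping algebra, and the gluing of the resulting representations). Everything downstream—Frobenius integrability, the integral subgroup theorem, and passage to the universal cover—is standard once a faithful representation is in hand. An alternative route avoiding Ado would build a local Lie group directly from the Baker--Campbell--Hausdorff series and then globalize, but the convergence of \textsc{bch} and the globalization of local groups are themselves delicate, so I would prefer the representation-theoretic approach.
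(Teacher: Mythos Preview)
The paper does not actually prove this theorem: it is stated without proof in the appendix as standard background material, alongside Lie's Second Theorem. So there is no ``paper's own proof'' to compare against.

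That said, your sketch is a correct and standard route. The reduction of uniqueness and the quotient description to Lie's Second Theorem is exactly right, and the centrality argument for $\Gamma$ is the usual one. For existence, the Ado--Frobenius--universal-cover chain works; the only caveat worth flagging is that the image $H\subset \GL(n,\R)$ need only be an \emph{immersed} (not embedded) subgroup, so one must be slightly careful that the smooth structure on $H$ comes from the leaf topology rather than the subspace topology---you implicitly acknowledge this by invoking the integral subgroup theorem. Your assessment that Ado's Theorem is the real load-bearing step is accurate.
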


\end{appendix}

\end{document}